\newtheorem{theorem}{Theorem}[section]
\newtheorem{lemma}[theorem]{Lemma}
\theoremstyle{definition}
\newtheorem{conjecture}[theorem]{Conjecture}
\newtheorem{question}[theorem]{Question}
\newcommand{\rb}{\text{rb}}
\newcommand{\Rb}{\text{Rb}}
\newcommand{\M}{\text{M}}
\renewcommand{\mod}[1]{\;(\text{mod }#1)}
\newcommand{\N}{\mathbb{N}}
\newcommand{\Z}{\mathbb{Z}}
\title{\Large\bf Bounds on Arithmetic Rainbow Ramsey Multiplicities}
\author{\sc Gabriel Elvin, Alexis Gonzales, Alejandro Rodriguez, \\ \sc and Israel Wilbur\thanks{This work was supported by CSUSB's Office of Student Research (OSR), the Proactive Approaches for Training in Hispanics in STEM grant (PATHS), and the Learning Aligned Employment Program (LAEP).}}
\begin{document}
\setcounter{page}{1}
\date{August 6, 2025}
\maketitle

\vskip 1.5em

\begin{abstract}
We study a quantitative Ramsey-type problem on $3$-term arithmetic progressions: how should the set of integers $[n] = \{1, 2, \dots, n\}$ be colored using 3 colors in order to maximize the number of rainbow 3-term arithmetic progressions? By \textit{rainbow}, we mean progressions whose elements are each assigned a distinct color. We determine a lower bound for this question and upper and lower bounds when $[n]$ is replaced with the integers modulo $n$, including an exact maximum when $n$ is a multiple of $3$.
\end{abstract}
 
\textbf{Keywords:} Additive combinatorics; Ramsey theory; Ramsey multiplicities

\textbf{Mathematics Subject Classification (2020):} 05D10

\section{Introduction}

A classic starting point in Ramsey theory is the following.
Consider coloring the edges of a complete graph $K_n$ with $2$ colors, either red or blue.
What is the minimum $n$ such that no matter the coloring, the $K_n$ 
will contain a monochromatic triangle, i.e. a copy of $K_3$ that 
has either all red edges or all blue edges?
The answer is $6$, and importantly, if $K_3$ is replaced by any fixed graph $H$,
a monochromatic copy of $H$ always exists, as long as $n$ is made large enough,
a famous result known as Ramsey's Theorem \cite{R1928}. 
Since such an $n$ is guaranteed, we can ask a quantitative follow-up question:
for very large $n$, what is the minimum number of monochromatic copies of $H$ guaranteed
in a 2-coloring of the edges of a $K_n$?
Or, to normalize, what is the minimum \textit{proportion},
i.e. the number of monochromatic copies of $H$ divided by the total number of copies $H$ in the $K_n$?
A result of Goodman \cite{G1959} in 1959 answers this question for $H = K_3$:
as $n \to \infty$, the minimum proportion of monochromatic copies of $K_3$ approaches $1/4$.
Notably, if the edges of a $K_n$ were colored uniformly randomly and independently,
the expected proportion of monochromatic triangles also approaches $1/4$.
Therefore, perhaps surprisingly, independently coloring the edges of $K_n$ uniformly at random achieves the minimum proportion of 
monochromatic copies of $K_3$, at least asymptotically.

This question can be generalized or altered in several ways. First, to reiterate, any fixed graph $H$ can be investigated.
Second, any fixed number of colors can be used.
The asymptotic minimum for a graph is sometimes referred to as its \textit{Ramsey multiplicity}, or the proportion after normalizing its \textit{Ramsey multiplicity constant}, e.g. the Ramsey multiplicity constant of $K_3$ is 1/4.
These types of problems can be altered in another way: rather than attempt to minimize
the number of monochromatic copies of $H$, one can attempt to \textit{maximize} the number of \textit{rainbow} copies of $H$.
Here, $H$ is rainbow if every edge is assigned a different color.
Work in this area can be found in \cite{B2024, D2018, S2023}.

Further still, we need not be limited to graphs. 
Analogous to the question Goodman answered in 1959, the following was posed by Ron Graham (see \cite{RZ1998}) in 1996:
If each element of the set $[n] \coloneqq \{1, 2, \dots, n\}$ is assigned a color, either red or blue,
what is the minimum number of monochromatic solutions to the equation $x + y = z$?
By \textit{monochromatic solution}, we mean a triple $(x, y, x + y)$ such that either all elements are blue
or all are red.
Instinctively, one might guess that, analogous to Goodman's result, the minimum proportion would again be
$1/4$ asymptotically. However, in 1998, Robertson and Zeilberger \cite{RZ1998} showed that the minimum proportion, i.e.
\[\frac{\text{\# of monochr. solns. to $x + y = z$}}{\text{\# of solutions to $x + y = z$}},\]
actually approaches $2/11$ as $n \to \infty$\footnote{The authors did not
directly compute this quantity, but it follows immediately from their
result.}.

The correct analogy to Goodman's result 
is to replace $[n]$ with the integers modulo $n$, which we will denote by $\Z_n = \{0, \dots, n - 1\}$
(this analogy is in fact a correspondence; see \cite{SW2017} for more details).
In this setting, as shown by Datskovsky \cite{D2003} in 2003, the minimum proportion of monochromatic
solutions to $x + y = z$ is asymptotically $1/4$ (in fact, it is exactly $1/4$ when $n$ is even
and nearly so when $n$ is odd).

Just as with graphs, these questions can be expanded in many different directions by changing the equation,
the number of colors, or the set of possible inputs, or by flipping the question from ``minimizing monochromatic'' 
to ``maximizing rainbow''.
In general, we are interested in showing that, for a given $3$-term equation,
the maximum proportion of rainbow solutions (or minimum proportion of monochromatic solutions) 
is always asymptotically greater (or less) than what is expected from uniformly random colorings.
We do this by providing explicit constructions which give lower bounds on the maximum (or
upper bounds on the minimum).
Next
we introduce essential definitions and notation. 

By 3-term equation, we mean an equation of the form $ax + by = cz$, where $a, b, c$ are positive integers.
An \textbf{$r$-coloring} of a set $X$ is a surjective function $f : X \to [r]$.
A \textit{rainbow solution} to an equation is a triple $(x, y, z)$ that satisfies the equation 
and is such that each coordinate is assigned a different color, 
i.e. $f(x) \neq f(y)$, $f(x) \neq f(z)$, $f(y) \neq f(z)$.

Let $E$ be a shorthand for an equation $ax + by = cz$,
$X_n \in \{[n], \Z_n\}$, and $f$ be an $r$-coloring of $X_n$. Then
\begin{itemize}
    \item (solutions): $T_{X_n}(E) \coloneqq \{(x, y, z) \in {X_n}^3 \,:\, ax + by = cz\}$,
    \item (rainbow solutions): $\Rb_{f, X_n}(E) \coloneqq \{(x,y,z) \in T_{X_n}(E) \,:\, \text{$(x,y,z)$ is rainbow}\}$,
    \item (monochromatic solutions): 
    \[\M_{f, X_n}(E) \coloneqq \{(x,y,z) \in T_{X_n}(E) \,:\, f(x) = f(y) = f(z)\}\]
    \item (rainbow proportion): $\displaystyle \rb_{f, X_n}(E) \coloneqq \frac{|\Rb_{f, X_n}(E)|}{|T_{X_n}(E)|}$,
    \item (maximum rainbow proportion): $\displaystyle \rb_{X_n}(E) \coloneqq \max_{f : {X_n} \to [r]}\{\rb_{f}(E)\}$.
\end{itemize}
We will omit ``$X_n$'' from subscripts when the choice of $X_n$ is clear from context. Since our results are asymptotic, we will also cover the notation needed to precisely describe them.
Let $f$ and $g$ be functions of $n$, the asymptotic variable. If there exists constants $C$ and $N$ such that 
$|f(n)| \leq Cg(n)$ for all $n \geq N$, we say $f = O(g)$. By $f = \Omega(g)$, we mean $g = O(f)$.
Lastly, If $\lim_{n\to\infty}f/g = 0$, then we say $f = o(g)$.

In what follows, we restrict our analysis to $3$-term equations and $3$-colorings.
Our main goal is to show that for certain equations, it is possible to find (sequences of) colorings that produce asymptotically more rainbow
solutions than what is expected from uniformly random colorings.
We capture this characteristic with the following definition:
$E$ is \textbf{$r$-rainbow-uncommon} over $X_n$ if 
\begin{equation}
\liminf_{n\to\infty} \rb_{X_n}(E) = \frac{2}{9} + \Omega(1).
\end{equation}
Note that $2/9$ is the (asymptotic) expected proportion of rainbow solutions to an equation
when coloring uniformly at random:
if a solution $(x,y,z)$, with $x,y,z$ distinct\footnote{The
solutions where $x,y,z$ are \textit{not} all distinct becomes a vanishing error term asymptotically.
}, is colored with 3 colors, there are $6$ rainbow colorings
out of $27$ total colorings, and the asymptotic $2/9$ proportion follows.
The $\Omega(1)$ term indicates we must be strictly above the $2/9$ random threshold.
A question regarding this definition is if
an asymptotic maximum always exists, i.e. if ``$\liminf$'' can be replaced with ``$\lim$''.
This has been settled for analogous questions on graphs 
(the answer is ``yes'' \cite{D2018}), 
but it is still open in the realm 
of equations.

Our main result centers around 3-term arithmetic progressions, which we abbreviate as $3$-APs.
We utilize the fact that $3$-APs are in 1-1 correspondence
with solutions to the equation $x + y = 2z$:
a progression $\{a, a + d, a + 2d\}$ corresponds to the solution $(a, a + 2d, a + d)$,
and a solution will always be in arithmetic progression.
Note that we consider $(x,y,z)$ and $(y,x,z)$ as distinct solutions,
and this amounts to reversing the order of the progression,
which we also count as distinct.
We now state our main result.
\begin{theorem}\label{thm-main}
The equation $x + y = 2z$ is $3$-rainbow-uncommon over both $[n]$ and $\Z_n$.
In particular,
\begin{align}
\frac{2}{3} + o(1) &\leq \rb_{[n]}(x + y = 2z), \label{result-n}\\
\frac{1}{3} - o(1) &\leq \rb_{\Z_n}(x + y = 2z) \leq \frac{2}{3}, \text{ when $n \neq 3t$,} \label{result-Zn-not}\\
&\;\;\;\;\; \rb_{\Z_n}(x + y = 2z) = \frac{2}{3}, \text{ when $n = 3t$,} \label{result-Zn-is}
\end{align}
where $t \in \N$.
\end{theorem}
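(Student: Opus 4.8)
The plan is to prove each of the three inequalities with an explicit $3$-coloring — all of them versions of ``color by residue modulo $3$'' — together with one short convexity argument for the upper bound. The quickest piece is the upper bound $\rb_{\Z_n}(x+y=2z)\le 2/3$. First I would record that every solution of $x+y=2z$ in $\Z_n$ is the progression $(z-u,\,z+u,\,z)$ for a unique pair $(z,u)\in\Z_n^2$, so $|T_{\Z_n}(x+y=2z)|=n^2$. Now fix any $3$-coloring $f$ with color classes of sizes $c_1,c_2,c_3$, where $c_1+c_2+c_3=n$. A rainbow solution must in particular satisfy $f(x)\ne f(z)$, so the number of rainbow solutions is at most the number of pairs $(x,z)\in\Z_n^2$ with $f(x)\ne f(z)$, which is $n^2-\sum_i c_i^2$; since $\sum_i c_i^2\ge n^2/3$ by Cauchy--Schwarz, we get $|\Rb_{f,\Z_n}(x+y=2z)|\le \tfrac23 n^2$ for every $f$, hence $\rb_{\Z_n}(x+y=2z)\le 2/3$.

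For the matching lower bound when $n=3t$, color $x$ by $x\bmod 3$ (a well-defined surjection onto three colors). A progression $(z-u,z+u,z)$ is rainbow precisely when $z-u,z,z+u$ realize all three residues mod $3$, i.e.\ precisely when $3\nmid u$; there are $2n/3$ such $u$ for each of the $n$ values of $z$, giving exactly $\tfrac23 n^2$ rainbow solutions and hence $\rb_{\Z_n}(x+y=2z)=2/3$. The bound over $[n]$ uses the same idea: color the integer $x$ by $x\bmod 3$. Now there is no wraparound, so $(z-d,z+d,z)$ is rainbow iff $3\nmid d$, and comparing the number of pairs $(z,d)$ with $z\pm d\in[n]$ and $3\nmid d$ against $|T_{[n]}(x+y=2z)|=n^2/2+O(n)$ yields rainbow proportion $\tfrac23+o(1)$.

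The remaining case, $\rb_{\Z_n}(x+y=2z)\ge \tfrac13-o(1)$ when $3\nmid n$, is the one that needs care, because ``residue mod $3$'' no longer defines a coloring of $\Z_n$. Instead I would color the representative, $f(x)=x\bmod 3$ for $x\in\{0,1,\dots,n-1\}$, and set $r:=n\bmod 3\in\{1,2\}$. Given a progression $(z-u,z+u,z)$ with $z,u\in\{0,\dots,n-1\}$, I would split into four cases according to whether reducing $z-u$ and/or $z+u$ modulo $n$ actually changes its value (``wraps''). If neither wraps, the analysis is exactly as over $[n]$: rainbow iff $3\nmid u$. If both wrap, each reduction shifts an outer residue by $-r$, so the three residues are $z-u+r,\ z,\ z+u-r$, which still form an arithmetic progression (common difference $u-r$), and the triple is rainbow iff $u\not\equiv r\pmod 3$. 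If exactly one wraps, the three residues are $z-u+r,\ z,\ z+u$ (or its mirror image), and being rainbow forces $u\bmod 3$ to avoid $\{0,\,r,\,2r\}$ — but that set is all of $\Z_3$ since $r\in\{1,2\}$, so these two cases contribute nothing.

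It then remains to count lattice points. The ``no wrap'' region and the ``both wrap'' region each contain $\tfrac14 n^2+O(n)$ pairs $(z,u)$ — this reduces to summing $\min(z,n-1-z)$ over $z$ — and a $\tfrac23$ fraction of the pairs in each region is rainbow by the criteria above, so the coloring has at least $\tfrac16 n^2+\tfrac16 n^2+O(n)=\tfrac13 n^2+O(n)$ rainbow solutions among the $n^2$ total, giving $\rb_{\Z_n}(x+y=2z)\ge \tfrac13-o(1)$. The main obstacle is really this last construction: the bookkeeping (the sum $\sum_z\min(z,n-1-z)$, and the $O(n)$ errors coming from the mod-$3$ restriction and from degenerate progressions) is routine, but the structural point — that with this coloring the singly-wrapped progressions are \emph{useless}, which is exactly where $3\nmid n$ bites — is what forces the construction down to $1/3$; closing the resulting gap with the $2/3$ upper bound is the genuinely hard open problem, and is not attempted here.
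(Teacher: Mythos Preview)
Your proof is correct and follows essentially the same strategy as the paper: the residue-mod-$3$ coloring for all lower bounds, and the convexity/Cauchy--Schwarz bound on monochromatic $(x,z)$-pairs for the upper bound. The one organizational difference is in the $3\nmid n$ case over $\Z_n$: the paper parametrizes by the first term and restricts to common differences $d\le\lfloor n/2\rfloor$, observes that only the non-wrapping triangle $x+2d<n$ contributes (exactly your ``singly-wrapped are useless'' point), and then doubles by the reversal symmetry $d\mapsto n-d$; you instead parametrize by the middle term, let $u$ run over all of $\{0,\dots,n-1\}$, and split into four wraparound cases, so that your ``both wrap'' region (with its rainbow criterion $u\not\equiv r$) is exactly what the paper's doubling step picks up implicitly. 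The two computations are equivalent and yield the same $n^2/3+O(n)$ count.
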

To prove the lower bounds, we find constructions that yield enough rainbow solutions
and explicitly count the number of such solutions. To prove the upper bounds, we use a straightforward
counting and optimization technique.

We also obtain an additional result for the equation $x + y = z$ over $\Z_n$.
\begin{theorem}
$x + y = z$ is 3-uncommon over $\Z_n$. 
\end{theorem}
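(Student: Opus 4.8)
The plan is to reduce the problem to a finite one: instead of constructing a good $3$-coloring of $\Z_n$ by hand, I will obtain all the colorings I need by lifting a single, cleverly chosen coloring of a fixed small group $\Z_m$. The first ingredient is a lifting lemma. Fix $m$ and a surjective $g\colon\Z_m\to[3]$, and let $\rho=\rho(g)$ denote the fraction of pairs $(a,b)\in\Z_m^2$ for which $g(a),g(b),g(a+b)$ are pairwise distinct. For $n\ge m$ define $f\colon\Z_n\to[3]$ by $f(x)=g(x\bmod m)$, which is again surjective. Every solution of $x+y=z$ in $\Z_n$ is an unconstrained pair $(x,y)$ with $z$ forced, and $z\bmod m\equiv(x\bmod m)+(y\bmod m)$; hence $(x,y,z)$ is rainbow under $f$ exactly when $(x\bmod m,\,y\bmod m)$ is one of the $\rho m^2$ rainbow pairs of $\Z_m$. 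Since each residue pair in $\Z_m^2$ is attained $(n/m)^2+O(n)$ times as $(x,y)$ ranges over $\Z_n^2$, this gives $|\Rb_{f,\Z_n}(x+y=z)|=\rho n^2+O(n)$, so $\rb_{f,\Z_n}(x+y=z)=\rho+O(1/n)$ and therefore $\liminf_{n}\rb_{\Z_n}(x+y=z)\ge\rho$. It thus suffices to exhibit a single pair $(m,g)$ with $\rho(g)>2/9$.

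Finding that example is the heart of the matter, and the obvious candidates are deceptive: coloring $\Z_n$ by residue mod $3$ makes the only rainbow residue patterns of $x+y=z$ the two patterns $(x,y,z)\equiv(1,2,0)$ and $(2,1,0)\pmod 3$, hence $\rho=2/9$ on the nose, and splitting $\Z_n$ into three equal intervals likewise yields $2/9$ asymptotically. To get past the random value one must break the symmetry among the colors. The smallest choice that works is $m=4$ with $A=\{0,2\}$, $B=\{1\}$, $C=\{3\}$ --- equivalently, color $\Z_n$ by ``even'', ``$\equiv1\pmod 4$'', and ``$\equiv3\pmod4$''. Enumerating the $16$ pairs of $\Z_4^2$, one finds that the triple $(a,b,a+b)$ is rainbow for exactly the six pairs $(a,b)\in\{(1,2),(2,1),(1,3),(3,1),(2,3),(3,2)\}$, so $\rho=6/16=3/8$. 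By the lifting lemma, $\liminf_n\rb_{\Z_n}(x+y=z)\ge 3/8=2/9+\Omega(1)$, which is precisely the claim that $x+y=z$ is $3$-uncommon over $\Z_n$.

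Two points need care. First, in the lifting step one must verify that the error is truly $O(n)$ rather than $O(n^2)$: this works because over $\Z_n$ a solution is a free pair $(x,y)$, so the reduced pair $(x\bmod m,\,y\bmod m)$ is equidistributed over $\Z_m^2$ up to a one-dimensional boundary discrepancy. Second --- and this is the real content --- one must actually locate a finite coloring beating $2/9$; since the natural candidates tie the random value exactly, a genuinely asymmetric coloring is required, and the merged-residue coloring above is essentially the simplest one that succeeds. One could search over larger $m$ by computer to optimize $\rho$ and extract the best constant this method gives, but for the theorem as stated the single example on $\Z_4$ already suffices.
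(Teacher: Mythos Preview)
There are two separate problems with the proposal, one of interpretation and one of mathematics.

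\medskip
\textbf{Wrong target.} In this paper, ``$3$-uncommon'' (as opposed to ``$3$-rainbow-uncommon'') refers to the \emph{monochromatic} version: one must exhibit colorings whose proportion of monochromatic solutions is asymptotically below the random value $1/9$. The paper's proof uses the coloring of $\Z_n$ by residues modulo~$5$ (red $=0$, blue $=\pm1$, green $=\pm2$) and shows the monochromatic proportion is at most $1/25$ when $5\mid n$ and $1/10+o(1)$ otherwise. Your argument instead bounds the \emph{rainbow} proportion from below, which is a different statement. Note that your $\Z_4$ coloring has $4$ monochromatic triples among $16$ (all coming from the class $\{0,2\}$), i.e.\ proportion $1/4>1/9$, so it does not establish $3$-uncommonness in the intended sense.

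\medskip
\textbf{The lifting lemma is false as stated.} Even granting the rainbow interpretation, the step ``$z\bmod m\equiv(x\bmod m)+(y\bmod m)$'' fails whenever $m\nmid n$. If $x+y\ge n$ in the integers then $z=x+y-n$, so $z\bmod m=(x+y-r)\bmod m$ where $r\equiv n\pmod m$; this affects a $\tfrac12+o(1)$ fraction of all pairs, not an $O(n)$ boundary set. For your coloring $g$ on $\Z_4$ and $r=1$, a direct check shows that \emph{none} of the sixteen shifted triples $(g(a),g(b),g(a+b-1))$ is rainbow. Hence for $n\equiv 1\pmod 4$ the lifted coloring yields rainbow proportion about $\tfrac12\cdot\tfrac38+\tfrac12\cdot 0=\tfrac{3}{16}<\tfrac29$, and the claimed $\liminf$ bound does not follow. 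The ``one-dimensional boundary discrepancy'' you invoke controls only the equidistribution of $(x\bmod m,\,y\bmod m)$; it does not address the change in the colour of $z$ caused by the wrap-around, which is where the argument breaks.
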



\section{Proof of Main Result}

Before analyzing any colorings, we present the following lemma,
which describes how many total solutions a specific class of equations has over $[n]$.

\begin{lemma}\label{lem-soln-n}
For $c \geq 2$, there are $n^2/c + O(n)$ solutions to $x + y = cz$ over $[n]$.
\end{lemma}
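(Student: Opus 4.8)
The plan is to eliminate the variable $z$ and reduce to a two-variable lattice-point count. First I would observe that a triple $(x,y,z) \in [n]^3$ satisfying $x+y = cz$ is completely determined by the pair $(x,y)$, since necessarily $z = (x+y)/c$. Hence $|T_{[n]}(x+y=cz)|$ equals the number of pairs $(x,y) \in [n]^2$ for which $(x+y)/c$ is a legitimate element of $[n]$, i.e.\ for which $c \mid x+y$ and $1 \le (x+y)/c \le n$.

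Next I would check that, given $c \geq 2$, the size constraint on $z$ is automatic once $c \mid x+y$. Indeed $x+y \ge 2 > 0$, so if $c \mid x+y$ then $x+y$ is a \emph{positive} multiple of $c$ and thus $x+y \ge c$, giving $z \ge 1$; and $x+y \le 2n \le cn$, giving $z \le n$. Therefore
\[
|T_{[n]}(x+y=cz)| = \#\{(x,y) \in [n]^2 \,:\, x+y \equiv 0 \pmod{c}\}.
\]

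It then remains to count the lattice points of $[n]^2$ lying on the anti-diagonals $x+y \equiv 0 \pmod c$. I would split according to the residue of $x$ modulo $c$: for each $a \in \{0,1,\dots,c-1\}$ the number of $x \in [n]$ with $x \equiv a \pmod c$ is $n/c + O(1)$ (implied constant at most $1$), and for each such $x$ we need $y \equiv -a \pmod c$, again $n/c + O(1)$ choices. Summing the product over the $c$ residue classes gives $c\,(n/c + O(1))^2 = n^2/c + O(n)$, the $O(n)$ absorbing the cross terms $c \cdot 2 \cdot (n/c) \cdot O(1)$ and the $c \cdot O(1)$ term. Since $c$ is fixed, all implied constants are harmless.

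There is no genuine obstacle here; the only care needed is the bookkeeping of the $O(1)$ errors coming from the floor functions in the residue counts. An alternative route, avoiding residues, is to sum over $z$ directly: for each $z$ with $cz \le 2n$ the number of pairs $(x,y)\in[n]^2$ with $x+y=cz$ is $\min(cz-1,\,2n+1-cz)$, and $\sum_z \min(cz-1,\,2n+1-cz)$ splits into one increasing and one decreasing arithmetic-progression sum, each of length roughly $n/c$ and height roughly $n$, contributing $\tfrac{n^2}{2c}$ apiece, for a total of $n^2/c + O(n)$.
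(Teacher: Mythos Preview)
Your proof is correct and follows essentially the same residue-class count as the paper: reduce to pairs $(x,y)\in[n]^2$ with $c\mid x+y$ and sum over residue classes. The only cosmetic difference is that the paper first restricts to $[ck]$ (writing $n=ck+r$) and bounds the leftover $\{ck+1,\dots,ck+r\}$ contribution separately, whereas you absorb that directly into the $n/c+O(1)$ count for each class---and you make the verification that $z\in[n]$ is automatic for $c\ge 2$ explicit, which the paper leaves implicit.
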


\begin{proof}
Let $n = ck + r$ $(0 \leq r \leq c - 1)$. Then $[ck + r] = [ck] \cup \{ck + 1,\ldots, ck + r\}$. We show there are $n^2/c - O(n)$ solutions in $[ck]$ and $O(n)$ solutions with a coordinate in $\{ck + 1,\ldots, ck + r\}$. First, we partition $[ck]$ into $c$ disjoint subsets, $R_0,\ldots,R_{c-1}$, where each $R_i$ has size $(n - r)/c$ and consists of all elements whose remainder modulo $c$ is $i$. For each remainder $i \in \{0,\ldots,c-1\}$, there exists the unique remainder $c - i$ such that when any element $x \in R_i$ is added to any element $y \in R_{c-i}$, the triple $(x, y, \frac{x + y}{c})$ is a solution. Therefore, each of the $c$ subsets will correspond to $((n - r)/c)^2$ solutions, for a total of \begin{equation}\label{eq-solns-[ck]}
    c \cdot \frac{(n-r)^2}{c^2} = \frac{(n-r)^2}{c} = \frac{1}{c}n^2 - O(n)
\end{equation} solutions in $[ck]$ (recall that we are counting $(x, y, z)$ and $(y, x, z)$ as distinct solutions). 

Lastly, we bound the number of solutions which include at least one element of $\{ck + 1,\ldots, ck + r\}$. Fix $x$ in this set. If $y$ is chosen next from $[n]$, $z$ is now determined (or possibly not in $[n]$).
With $r$ choices for $x$ and at most $n$ choices for $y$, this yields no more than $rn$ solutions, and with 6 orderings in which to choose $x,y,z$, this results in at most $6rn$ solutions, giving us the desired result.
\end{proof}

Next, we describe the coloring used to achieve our main result. 
Define the $3$-coloring $f\colon [n] \to \{1, 2, 3\}$ by \begin{align}\label{3-coloring-func}
f(x) &= 
\begin{cases}
1 \text{ (red)}, & x \equiv 1 \mod 3, \\
2 \text{ (blue)}, & x \equiv 2 \mod 3, \\
3 \text{ (green)}, & x \equiv 0 \mod 3.
\end{cases}
\end{align} 

We now prove (\ref{result-n}).

\begin{proof}
Let $E$ be the equation $x+y = 2z$. We obtain a lower bound for the maximum via the coloring $f$. Under $f$, $|T_{[n]}(E)| = |\Rb_{f}(E)| + |\M_{f}(E)|$; that is, there are no dichromatic solutions. To see this, if $x \equiv a \mod 3$, then $z \equiv a + d \mod 3$ and $y \equiv a + 2d \mod 3$, where $d$  is the common difference of the corresponding $3$-AP. For each possible $d \equiv 0, 1, 2 \mod 3$, 
the remainders of $x$, $y$, and $z$ mod $3$ are either all equal (when $d \equiv 0 \mod 3$) or all distinct (when $d \equiv 1, 2 \mod 3$). 
By Lemma \ref{lem-soln-n}, $|T_{[n]}(E)| = n^2/2 + O(n)$, so determining the number of monochromatic solutions will yield $|\Rb_{f}(E)|$, leading to the proportion in (\ref{result-n}). Note that a pair $(x, y)$ forms 
a valid solution to $E$ and satisfies $x \equiv y \mod 3$ if and only if $x \equiv y \mod 6$. Since each residue class modulo 6 contains $n/6 + O(1)$ elements, the number of solutions satisfying $x \equiv y \mod 6$ (and thus $x \equiv y \mod 3$) is 
\[6 \cdot \left(\frac{n}{6} + O(1)\right)^2 = \frac{n^2}{6} + O(n).\] 
As these are the monochromatic solutions, it follows that $|\Rb_{f}(E)| = n^2/3 + O(n)$. Therefore, \begin{align}
    \rb_{[n]}(E) \geq \displaystyle \rb_{f}(E) = \frac{n^2/3 + O(n)}{n^2/2 + O(n)} = \frac{2}{3} + o(1),
\end{align} which is our desired result.
\end{proof}

For our result over $\Z_n$, we will use three classic results from elementary number theory.
We state them here but omit their proofs and then use them to prove Lemma \ref{lem-solns-Zn} below.

\begin{theorem}\label{thm-nt-1}
The congruence $ax \equiv b \mod{n}$ is solvable in integers if and only if $\gcd(a,n)|b$.
\end{theorem}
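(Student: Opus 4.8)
The plan is to prove the equivalence by unwinding the definition of congruence and invoking B\'ezout's identity. Throughout, write $d = \gcd(a,n)$.

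First I would dispatch the forward direction. Suppose $x \in \Z$ satisfies $ax \equiv b \mod{n}$; by definition $n \mid ax - b$, so $ax - b = kn$ for some $k \in \Z$, and hence $b = ax - kn$. Since $d$ divides both $a$ and $n$, it divides the right-hand side, so $d \mid b$, as required.

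For the reverse direction, suppose $d \mid b$, say $b = dm$ with $m \in \Z$. The key tool is B\'ezout's identity: there exist integers $u, v$ with $au + nv = d$. Multiplying through by $m$ gives $a(um) + n(vm) = dm = b$, so $a(um) - b = -n(vm)$ is a multiple of $n$; that is, $x = um$ is a solution of $ax \equiv b \mod{n}$. This completes the equivalence.

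I do not expect a genuine obstacle here: the entire content of the reverse direction is B\'ezout's identity, which is standard (and which one may either cite directly or derive from the Euclidean algorithm by noting that $\{au + nv : u, v \in \Z\}$ is precisely the set of integer multiples of $d$). If one preferred to avoid B\'ezout, the reverse direction could instead be reduced to the coprime case by dividing $a$, $b$, and $n$ through by $d$ and then using invertibility of $a/d$ modulo $n/d$, but the B\'ezout route is the cleanest.
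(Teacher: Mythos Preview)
Your proof is correct and is the standard argument via B\'ezout's identity. The paper itself does not prove this statement: it explicitly lists it among ``three classic results from elementary number theory'' whose proofs are omitted, so there is no proof in the paper to compare against.
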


\begin{theorem}\label{thm-nt-2}
    Given integers $a$, $b$, and $c$ with $a$ and $b$ not both $0$, there exist integers $x$ and $y$ that satisfy the equation $ax + by  = c$ if and only if $\gcd(a,b)|c$.
\end{theorem}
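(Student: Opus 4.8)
The plan is to prove the two implications separately, writing $d = \gcd(a,b)$ throughout (well-defined and positive since $a$ and $b$ are not both zero). The forward implication is immediate: if integers $x, y$ satisfy $ax + by = c$, then since $d \mid a$ and $d \mid b$, the number $d$ divides every integer linear combination of $a$ and $b$, and in particular $d \mid (ax + by) = c$.

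The substance of the argument is the reverse implication, and the hard part will be establishing B\'ezout's identity: that $d$ itself can be written as $d = ax_0 + by_0$ for some integers $x_0, y_0$. I would obtain this by a well-ordering argument. Consider the set of positive integers of the form $ax + by$ with $x, y \in \Z$; it is nonempty because $a$ and $b$ are not both zero, so it contains $|a|$ or $|b|$, and hence it has a least element $e = ax_0 + by_0$. One then shows $e \mid a$ by division: writing $a = qe + r$ with $0 \le r < e$, the remainder $r = a - q(ax_0 + by_0) = a(1 - qx_0) + b(-qy_0)$ is again an integer combination of $a$ and $b$, nonnegative and strictly below $e$, so minimality of $e$ forces $r = 0$. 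The identical argument gives $e \mid b$, hence $e \mid \gcd(a,b) = d$; conversely $d \mid a$ and $d \mid b$ force $d \mid (ax_0 + by_0) = e$, and since both are positive we conclude $d = e = ax_0 + by_0$. To finish the reverse implication, if $d \mid c$ we write $c = dk$ for an integer $k$ and observe that $c = a(kx_0) + b(ky_0)$, so $(x, y) = (kx_0, ky_0)$ is a solution.

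Essentially all the difficulty is concentrated in that well-ordering step; everything else is bookkeeping. An alternative would be to run the extended Euclidean algorithm, which constructs $x_0$ and $y_0$ explicitly by back-substitution through the remainder sequence, but the well-ordering proof is shorter to state. I also note that Theorem \ref{thm-nt-1} is the special case obtained by taking the modulus to be $b$ (after the sign change $y \mapsto -y$, and excluding the degenerate case $b = 0$, where the equation reduces to $ax = c$ and $\gcd(a,0) = |a|$), so one could instead deduce the present statement from Theorem \ref{thm-nt-1}.
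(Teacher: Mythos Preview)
Your proof is correct: the forward direction is the trivial divisibility observation, and for the converse you give the standard well-ordering proof of B\'ezout's identity and then scale. There is nothing to compare against, however, because the paper explicitly states these three number-theoretic results ``but omit[s] their proofs'' as classical; your argument is exactly the textbook proof one would expect to fill that gap.
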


\begin{theorem}\label{thm-nt-3}
If $ax \equiv b \mod{n}$ has a solution, then there are exactly $\gcd(a,n)$ solutions in $\{0, 1, 2, \ldots, n - 1\}$, the canonical complete residue system modulo $n$.
\end{theorem}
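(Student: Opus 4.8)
The plan is to pin down the entire solution set of $ax \equiv b \mod{n}$ starting from one known solution. Write $d = \gcd(a,n)$, and let $x_0 \in \{0,1,\dots,n-1\}$ be a solution, which exists by hypothesis (equivalently, $d \mid b$, by Theorem \ref{thm-nt-1}, though we will not need this). First I would verify that every term of the arithmetic progression $x_0,\ x_0 + n/d,\ x_0 + 2n/d,\ \dots$ is again a solution: for any integer $t$ we have $a(x_0 + t\cdot n/d) = ax_0 + (a/d)tn \equiv ax_0 \equiv b \mod{n}$, since $n \mid (a/d)tn$. Reducing the values $x_0 + i(n/d)$ for $i = 0,1,\dots,d-1$ modulo $n$ therefore produces $d$ solutions lying in the canonical residue system.

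Next I would show that these $d$ solutions are pairwise distinct modulo $n$ and that there are no others. For distinctness: if $x_0 + i(n/d) \equiv x_0 + j(n/d) \mod{n}$ with $0 \le i,j \le d-1$, then $n \mid (i-j)(n/d)$, hence $d \mid (i-j)$, which forces $i = j$. For completeness: if $x_1$ is any solution, then $a(x_1 - x_0) \equiv 0 \mod{n}$, i.e.\ $n \mid a(x_1-x_0)$; dividing by $d$ gives $(n/d) \mid (a/d)(x_1-x_0)$, and since $\gcd(a/d,\,n/d) = 1$ we conclude $(n/d) \mid (x_1 - x_0)$, that is, $x_1 \equiv x_0 \mod{n/d}$. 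Thus every solution lies in the single residue class of $x_0$ modulo $n/d$, whose intersection with $\{0,1,\dots,n-1\}$ is exactly the set of $d$ values already exhibited. Combining the two halves, $ax \equiv b \mod{n}$ has precisely $d = \gcd(a,n)$ solutions in the canonical complete residue system, as claimed.

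I do not expect any genuine obstacle here: the argument is a routine divisibility manipulation. The one point that deserves a moment's care is the cancellation step, where I use $\gcd(a/d, n/d) = 1$ to pass from $(n/d) \mid (a/d)(x_1-x_0)$ to $(n/d) \mid (x_1 - x_0)$; this coprimality is immediate from the definition of $d$ as the greatest common divisor of $a$ and $n$. An alternative, slightly slicker route would be to observe that $x \mapsto ax$ maps $\Z_n$ onto the subgroup $a\Z_n = d\Z_n$ of size $n/d$, with each fiber of constant size $d$, so that the preimage of $b$ (when nonempty) has exactly $d$ elements; but the elementary progression argument above is shortest and fully self-contained given the tools already in place.
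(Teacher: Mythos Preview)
Your proof is correct and is the standard textbook argument for this classical result. However, there is nothing to compare it against: the paper explicitly introduces this theorem as one of ``three classic results from elementary number theory'' and states that it will ``omit their proofs,'' so no proof of this statement appears in the paper at all.
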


\begin{lemma}\label{lem-solns-Zn}
There are $n^2$ solutions to $ax + by = cz$ over $\Z_n$, assuming $\gcd(a,b,c) = 1$.
\end{lemma}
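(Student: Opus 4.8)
The plan is to count the solutions by fixing the first two coordinates. Let $E$ denote $ax+by=cz$ and set $g=\gcd(c,n)$. For fixed $x,y\in\Z_n$, the triple $(x,y,z)$ is a solution exactly when $cz\equiv ax+by\mod n$; since $g\mid n$, the condition $g\mid(ax+by)$ is well defined on $\Z_n$ (independent of the chosen integer representatives), and Theorems \ref{thm-nt-1} and \ref{thm-nt-3} say this congruence in $z$ has exactly $g$ solutions when $g\mid(ax+by)$ and none otherwise. Hence
\[
|T_{\Z_n}(E)| = g\cdot\#\{(x,y)\in\Z_n^2 : ax+by\equiv 0\mod g\}.
\]

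Next I would reduce modulo $g$. Since $g\mid n$, each residue class mod $g$ has exactly $n/g$ representatives in $\Z_n$, so the pair count above equals $(n/g)^2$ times the number of $(x,y)\in\Z_g^2$ with $ax+by\equiv 0\mod g$. The crucial point is that $\gcd(a,b,g)=\gcd(a,b,c,n)=1$, using the hypothesis $\gcd(a,b,c)=1$. Under this coprimality I claim $ax+by\equiv 0\mod g$ has exactly $g$ solutions in $\Z_g^2$: fixing $x$ and applying Theorems \ref{thm-nt-1} and \ref{thm-nt-3} to $by\equiv -ax\mod g$ gives $\gcd(b,g)$ values of $y$ precisely when $\gcd(b,g)\mid ax$; and the number of $x\in\Z_g$ with $\gcd(b,g)\mid ax$ is $g/\gcd(b,g)$, because $\gcd(b,g)\mid g$ and the congruence $ax\equiv 0\mod{\gcd(b,g)}$ has $\gcd\bigl(a,\gcd(b,g)\bigr)=\gcd(a,b,g)=1$ solutions modulo $\gcd(b,g)$. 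Multiplying, the count is $\gcd(b,g)\cdot g/\gcd(b,g)=g$. (Equivalently, the homomorphism $\Z_g^2\to\Z_g$, $(x,y)\mapsto ax+by$, has image the subgroup generated by $\gcd(a,b,g)=1$, hence is surjective with kernel of size $g$.) Assembling the three counts yields $g\cdot(n/g)^2\cdot g=n^2$.

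The one place requiring care is this middle step: $\gcd(a,g)$ and $\gcd(b,g)$ may individually exceed $1$, so the count of $(x,y)\in\Z_g^2$ with $ax+by\equiv 0\mod g$ is not obviously $g$ — the hypothesis enters exactly to force the nested gcd $\gcd\bigl(a,\gcd(b,g)\bigr)$ down to $1$. Once that is settled, the rest is routine bookkeeping with the three recorded number-theoretic facts together with the elementary observation that $g\mid n$ lets one trade counting in $\Z_n$ for $(n/g)$-fold counting in $\Z_g$.
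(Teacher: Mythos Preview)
Your proof is correct and follows essentially the same route as the paper: fix $(x,y)$, use Theorems~\ref{thm-nt-1} and~\ref{thm-nt-3} to get $g=\gcd(c,n)$ choices of $z$ whenever $g\mid ax+by$, then count the admissible pairs $(x,y)$ by solving $by\equiv -ax$ modulo $g$ and invoking the hypothesis via $\gcd(a,\gcd(b,g))=\gcd(a,b,g)=1$. The only organizational difference is that you first pass from $\Z_n$ to $\Z_g$ using the $(n/g)^2$ scaling before counting pairs, whereas the paper stays in $\Z_n$ and $\Z_d$ throughout; the arithmetic is identical, and your parenthetical homomorphism remark is a slightly cleaner way to phrase the same count.
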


\begin{proof}
        We wish to count the number of solutions to
            \begin{equation}\label{alex-1}
                ax + by \equiv cz \mod{n}, \qquad 0 \leq x, y, z \leq n - 1.
            \end{equation}
        For a fixed $x, y$, by Theorem \ref{thm-nt-1}, there exists $z$ which satisfies the congruence (\ref{alex-1}) if and only if
            \begin{equation}\label{alex-2}
                \gcd(c,n) | ax + by.
            \end{equation}
        Putting $d = \gcd(c,n)$, (\ref{alex-2}) gives us
            \begin{equation}\label{alex-3}
                \begin{split}
                    ax + by &= pd,\\
                    pd - by &= ax,
                \end{split}
            \end{equation}
        where $p \in \Z$. Now by Theorem \ref{thm-nt-2}, a $y$ which satisfies (\ref{alex-3}) exists if and only if $\gcd(b,d)$ divides $ax$. Note that
        this is actually if and only if $\gcd(b,d) | x$, since $\gcd(b,d)$ and $a$ are relatively prime,
        as we are assuming the original equation $ax + by = cz$ is fully reduced. 
        Thus, there are $n /\gcd(b,d)$
        many $x$s in $\Z_n$ that will be a multiple of $\gcd(b,d)$ and hence yield a solution. 
        Now suppose $x$ yields a solution, i.e. satisfies $\gcd(b,d)|x$. Taking (\ref{alex-3}) modulo $d$ gives us
            \begin{equation}\label{alex-5}
                -by \equiv ax \mod{d}.
            \end{equation}
        Then by Theorem \ref{thm-nt-3} there are $\gcd(b,d)$ many $y$s in $\Z_d$ that will satisfy (\ref{alex-5}). Note, each solution in $\Z_d$ corresponds to $n/d$ solutions in $\Z_n$. Thus, there will be $\gcd(b,d) \cdot (n/d)$ suitable $y$s in $\Z_n$ for each suitable $x$. Therefore, there will be
            $$\left( \frac{n}{\gcd(b,d)} \right) \cdot \left( \frac{\gcd(b,d) \cdot n}{d} \right) = \frac{n^2}{d}$$
        pairs $(x,y)$ which satisfy (\ref{alex-2}). Again by Theorem \ref{thm-nt-3}, for each such pair there will be exactly $\gcd(c,n) = d$ many $z$s in $\Z_n$ that form a solution to (\ref{alex-1}). Therefore, there are
            $(n^2 / d) \cdot d = n^2$ total solutions.
\end{proof}

Now we will prove the lower bounds of (\ref{result-Zn-is}) and (\ref{result-Zn-not}), respectively.
In what follows, we refer to $\Z_n$ as the set $\{\overline{0}, \overline{1}, \dots, \overline{n-1}\}$.

\begin{proof} 

        First, we redefine the coloring $f$ (note it has the same basic structure as in (\ref{3-coloring-func})).
        Let $\iota : \Z_n \to \{0,1,\dots,n-1\}$ be simply $\iota(\overline{x}) = x$: each element gets mapped to its unique representative in $\{0,1,\dots,n-1\} \subseteq \Z$, and let $g : \{0,1,\dots,n-1\} \to \{1, 2, 3\}$ be
        \begin{equation}
        g(x) = 
\begin{cases}
1 \text{ (red)}, & x \equiv 0 \mod 3, \\
2 \text{ (blue)}, & x \equiv 1 \mod 3, \\
3 \text{ (green)}, & x \equiv 2 \mod 3.
\end{cases}
        \end{equation}
        Finally, let $f = g \circ \iota$. Suppose we have a solution $(\overline{x}, \overline{y}, \overline{z}) = (\overline{a}, \overline{a} + 2\overline{d}, \overline{a} + \overline{d})$. The following argument is simpler over the integers, so we will use $\iota(\cdot)$ throughout, e.g. $d = \iota(\overline{d}) \in \Z$. Just as in the proof of (\ref{result-n}), for each possible $d \equiv 0, 1, 2 \mod 3$, 
the remainders of $x$, $y$, and $z$ mod $3$ are either all equal (when $d \equiv 0$) or all distinct (when $d \equiv 1, 2$).
Crucially, when $n = 3t$, a solution is rainbow if and only if $d \equiv 1 \mod{3}$ or $d \equiv 2 \mod{3}$, just as over $[n]$.
This is because when the solution is viewed as a $3$-AP moving sequentially $a \to a + d \to a + 2d$, if $a + d$ or $a + 2d$ go beyond $n$
and wrap back around,
the pattern repeats exactly (red, blue, green, \dots) and nothing changes.
Since each type of $d$ is equally likely, in this case, exactly $2/3$ of the solutions will be rainbow.

    Now we consider the case when $n \neq 3t$. To do this, we again make use of the $1$-$1$ correspondence between $3$-APs and solutions to the equation $x + y = 2z$. Consider each $3$-AP whose common difference satisfies $0 \leq d \leq \lfloor n/2 \rfloor$ and $d \neq 3t$. We only consider these because any $3$-AP with common difference greater than $\lfloor n/2 \rfloor$ will have the negative common difference of an already counted progression. Moreover, if $d = 3t$, then by definition of $f$, the $3$-AP will not be rainbow unless the progression could wrap around twice, but this will not happen because of the restriction $d \leq \lfloor n/2 \rfloor$. This will allow us to obtain the total number of rainbow solutions by counting only those that satisfy $0 \leq d \leq \lfloor n/2 \rfloor$ and essentially doubling the result (The cases where $d = 0$ and $d = n/2$ when $n$ is even get counted twice, resulting in a negligible $O(1/n)$ error term after dividing by the total number of progressions). Each $3$-AP corresponds to a pair $(x, d)$ where $x$ is the first term and $d$ is the common difference. Note that such a $3$-AP is rainbow if and only if its corresponding pair satisfies $x + 2d < n$. To see this, first recall that if $x + 2d \geq n$, the 3-AP wraps around only once because of the upper bound on $d$, and this type of progression is rainbow if and only if $n$ is a multiple of 3, which we are assuming is not the case. In Figure \ref{fig:graph}, we see that this inequality forms a triangle of area $n^2/4$, so it contains $n^2/4 - O(n)$ 3-APs (as circles/dots in the figure). Furthermore, $2/3 - o(1)$ of those $3$-APs have $d$ not a multiple of $3$. 
    \begin{figure}
        \centering
\begin{tikzpicture}[scale=1.2]
\filldraw[gray!50] (0,2) -- (4,0) -- (0,0) -- cycle;
\draw (0,2) node[anchor=east]{$\frac{n}{2}$};

\draw[thick] (-0.5,0) -- (4.5,0) node[anchor=west]{$x$};
\draw[thick] (0,-0.5) -- (0,2.5) node[anchor=south]{$d$};
\draw (4,0) node[anchor=north]{$n$};
\foreach \x in {0,...,40}{
    \foreach \d in {1,4,...,19}{
        \filldraw (0.1*\x,0.1*\d) circle (0.03);
    }
    \foreach \d in {2,5,...,20}{
        \filldraw (0.1*\x,0.1*\d) circle (0.03);
    }
    \foreach \d in {0,3,...,18}{
        \draw[black!50] (0.1*\x,0.1*\d) circle (0.03);
    }
}
\draw[dashed] (4.075,0) -- (4.075,2.075) -- (0,2.075);

\end{tikzpicture}
        \caption{Graph of $x + 2d < n$ shaded with each pair $(x, d)$ representing a $3$-AP that is black if $d \neq 3t$ and white if $d = 3t$.}
        \label{fig:graph}
    \end{figure}
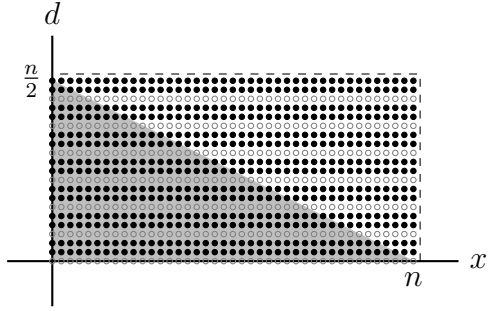
    Therefore, \begin{equation}
        \left(\frac{1}{4}n^2 - O(n)\right) \cdot \left(\frac{2}{3} - o(1)\right) = \frac{1}{6}n^2 - O(n)
    \end{equation} of the $3$-AP's are rainbow. Doubling the result to account for $d > \lfloor n/2 \rfloor$
    and dividing by $n^2$, the total number of solutions given by Lemma \ref{lem-solns-Zn}, completes the proof.
    \end{proof}

Finally, we prove the upper bounds of (\ref{result-Zn-not}) and (\ref{result-Zn-is}).

\begin{proof}
We claim that at least $1/3$ of solutions cannot be rainbow.
Let $rn$, $bn$, $gn$, be the number of red, blue, and green elements of $\Z_n$, respectively,
where, $r, b, g \geq 0$ and $r + b + g = 1$.
Note that every pair $(x, z)$ uniquely determines a solution to $x + y = 2z$.
Therefore, we can count the number of monochromatic pairs of this form to get a lower bound on the 
number of non-rainbow solutions.
The number of non-rainbow solutions is at least
\begin{align*}
& \text{\# of red pairs $(x,z)$} + \text{\# of blue pairs $(x,z)$} + \text{\# of green pairs $(x,z)$} \\
=& (rn)^2 + (bn)^2 + (gn)^2 \\
=& (r^2 + b^2 + g^2)n^2.
\end{align*}
For a fixed $n$, the final expression is minimized when $r = b = g = 1/3$, which means at least $n^2/3$ 
solutions will not be rainbow, so at most $2n^2 / 3$ will be rainbow, and our initial claim follows after applying Lemma \ref{lem-solns-Zn}.
Note that the lower and upper bounds of $\rb_{\Z_n}(x + y = 2z)$ coincide when $n = 3t$, giving us a true maximum in (\ref{result-Zn-is}).
\end{proof}

\section{An Extra Result}\label{sec-extra}

We also obtain a ``traditional'' uncommonness result. 
That is, a result regarding the \textit{minimum} number of \textit{monochromatic} solutions.
The work in this section serves an example of how to approach a modified version of the original problem.

Note that for a $3$-term equation, when coloring uniformly at random using $3$ colors, a solution (with distinct inputs) has a
$1/9$ probability of being monochromatic. 
Therefore, to show a particular $3$-term equation is $3$-uncommon, we can find a coloring 
with asymptotically strictly \textit{fewer} than a $1/9$ proportion of monochromatic solutions.

\begin{theorem}\label{thm-alexis}
$x + y = z$ is $3$-uncommon over $\Z_n$.
In particular,

\begin{equation}\label{eq-schur-bounds}
\frac{\text{min. \# of monochr. solutions}}{\text{total \# of solutions}} \leq 
\begin{cases}
\frac{1}{25}, & n = 5t, \\
\frac{1}{10} + o(1), & n \neq 5t
\end{cases}.
\end{equation}
\end{theorem}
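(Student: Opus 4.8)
The plan is to prove both bounds by explicit constructions: the quantity on the left of (\ref{eq-schur-bounds}) is a \emph{minimum} over $3$-colorings of $\Z_n$, so it suffices to exhibit, in each case, one coloring with few monochromatic solutions. By Lemma \ref{lem-solns-Zn} with $a = b = c = 1$ (so $\gcd(a,b,c) = 1$), the equation $x + y = z$ has exactly $n^2$ solutions over $\Z_n$, so the goal is a coloring with at most $n^2/25$ monochromatic solutions when $n = 5t$, and with $n^2/10 + o(n^2)$ of them when $n \neq 5t$.

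For $n = 5t$, I would color $\overline{x} \in \Z_n$ by $\chi(x \bmod 5)$, where $\chi \colon \Z_5 \to \{1,2,3\}$ is the coloring with color classes $\{0\}$, $\{1,4\}$, $\{2,3\}$. A direct check shows that the only monochromatic solution of $x + y = z$ in $\Z_5$ under $\chi$ is the trivial one $(0,0,0)$; this is best possible, since $(0,0,0)$ is monochromatic under every coloring. Because the color of $\overline{x}$ depends only on $x \bmod 5$, every monochromatic solution in $\Z_n$ reduces modulo $5$ to a monochromatic solution in $\Z_5$, hence satisfies $x \equiv y \equiv z \equiv 0 \pmod 5$; conversely, any $x, y$ in the residue class of $0$ force $z = x + y$ into that class as well. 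There are $t$ such choices for each of $x$ and $y$, giving exactly $t^2 = n^2/25$ monochromatic solutions and the ratio $1/25$.

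For $n \neq 5t$, I would replace residues by consecutive arcs: partition $\{0, 1, \dots, n-1\}$ into five blocks $A_0, \dots, A_4$, each of size $n/5 + O(1)$, and color $\overline{x}$ by $\chi(i)$ when $x \in A_i$, with the same $\chi$ as above. The key point is geometric: for $x \in A_i$ and $y \in A_j$, the element $z = (x+y) \bmod n$ always lands in one of the two consecutive arcs $A_{(i+j) \bmod 5}$ or $A_{(i+j+1) \bmod 5}$ — the shift by $0$ or $1$ being governed by whether the offsets of $x$ and $y$ within their blocks sum to less than $n/5$ or not, and wraparound modulo $n$ only ever contributes the same shift modulo $5$. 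As $(x,y)$ ranges over $A_i \times A_j$, each of the two cases occurs for $n^2/50 + O(n)$ pairs (the two triangles cut from $A_i \times A_j$ by an anti-diagonal). Summing over the cells $(i,j)$ with $\chi(i) = \chi(j)$ the number of the two arcs $A_{i+j}, A_{i+j+1}$ whose color also equals $\chi(i)$, one evaluates this total to be $5$ for our $\chi$; hence the number of monochromatic solutions is $\tfrac{n^2}{50} \cdot 5 + O(n) = \tfrac{n^2}{10} + O(n)$, and dividing by $n^2$ gives $1/10 + o(1)$. Since $1/25 < 1/9$ and $1/10 < 1/9$, this proves $x + y = z$ is $3$-uncommon over $\Z_n$.

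The finite optimization over colorings of $\Z_5$ is a small search, so the main obstacle is the bookkeeping in the $n \neq 5t$ case: one must check that in \emph{every} cell $A_i \times A_j$ — including those with $i + j \ge 5$, where wraparound is forced — the arc of $z$ is exactly one of two consecutive arcs, and that the count splits evenly between them up to an $O(n)$ error coming from block-boundary effects and the slightly unequal block sizes when $5 \nmid n$. After that, the ratio computation is routine.
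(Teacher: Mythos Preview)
Your proof is correct, and for $n = 5t$ it coincides with the paper's: both use the coloring of $\Z_n$ by $\chi(x \bmod 5)$ with classes $\{0\}, \{\pm 1\}, \{\pm 2\}$, observe that the only monochromatic Schur triple in $\Z_5$ is $(0,0,0)$, and lift to get exactly $n^2/25$ monochromatic solutions.

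For $n \neq 5t$ the two proofs take genuinely different routes. The paper keeps the \emph{same} residue-mod-$5$ coloring and analyzes it case by case on $n \bmod 5$: a monochromatic pair $(x,y)$ produces a sum whose residue mod $5$ is either $x+y$ or $x+y-n$ depending on whether $x+y<n$, and for each of $n\equiv \pm 1,\pm 2 \pmod 5$ one tabulates which of the four sub-cases in each color class stay monochromatic, arriving at $(\tfrac{1}{50}+\tfrac{3}{50}+\tfrac{1}{50})n^2 + O(n)$. You instead switch colorings, replacing residue classes by five consecutive arcs $A_0,\dots,A_4$ and coloring arc $A_k$ by $\chi(k)$; the analysis then becomes a single $5\times 5$ table in which each cell $A_i\times A_j$ sends $z$ into $A_{(i+j)\bmod 5}$ or $A_{(i+j+1)\bmod 5}$ with weight $n^2/50+O(n)$ each, and summing the matches over the nine same-color cells gives the constant $5$.

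What each approach buys: yours avoids the four-way case split on $n\bmod 5$ and packages everything into one uniform computation, at the cost of introducing a second coloring and the geometric ``anti-diagonal'' bookkeeping you flag (which is genuine but routine: the discrepancy between $a_i+a_j$ and $a_{i+j}$ is $O(1)$, so the exceptional pairs in each cell form a strip of width $O(1)$ and contribute $O(n)$). The paper's approach keeps a single coloring throughout --- conceptually tidier, and it makes clear that the same object witnesses both bounds --- but the casework on $n\bmod 5$ is the price. Both land on $n^2/10 + O(n)$, and hence on $1/10 + o(1) < 1/9$.
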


\begin{proof}
Let $\iota : \Z_n \to \{0,1,\dots,n-1\}$ be defined as in the proof of (\ref{result-Zn-not}) and (\ref{result-Zn-is}), and let $g : \{0,1,\dots,n-1\} \to \{1,2,3\}$ be:
\begin{equation}
g(x) =
\begin{cases}
1 \text{ (red)}, & x \equiv 0 \mod 5, \\
2 \text{ (blue)}, & x \equiv \pm 1 \mod 5, \\
3 \text{ (green)}, & x \equiv \pm 2 \mod 5.
\end{cases}
\end{equation}
Then $f = g \circ \iota$ achieves the bounds. To show this, we split into cases depending on the value of $n \mod 5$.

First, if $n = 5t$, then it follows that the only monochromatic solutions are red. 
In this case, each red pair $(x, y)$ uniquely determines a red solution $(x, y, x + y)$, 
and hence there are 
\begin{equation}
\left(\frac{n}{5}\right)\left(\frac{n}{5}\right) = \frac{1}{25}n^2
\end{equation}
monochromatic solutions. 
Dividing by the total number of solutions
provided by Lemma \ref{lem-solns-Zn}, we get the first case of (\ref{eq-schur-bounds}). 

For $n \neq 5t$, a red pair $(x,y)$ will lead to a monochromatic solution if and only if $x + y < n$,
which is the case for $1/2 - o(1)$ of the total pairs.
Therefore, since there are $(\lceil n/5 \rceil)^2 = n^2/25 + O(n)$ red pairs, there are
\begin{equation}
\left( \frac{1}{2} - o(1) \right) \cdot \left(\frac{n^2}{25} + O(n)\right) = \frac{1}{50}n^2 + O(n)
\end{equation}
red solutions.
Now we split into two cases to count blue and green solutions.

If $n = 5t \pm 1$, a blue pair $(x,y)$ can lead to a monochromatic solution only 
when $x + y \geq n$, which happens roughly half the time. And among these pairs, 
we will get a blue solution in the following cases:
\begin{itemize}
    \item When $x \equiv 1 \mod 5$ and $y \equiv -1 \mod 5$, 
    \item When $x \equiv -1 \mod 5$ and $y \equiv 1 \mod 5$,
    \item If $n = 5t + 1$, then when $x, y \equiv 1 \mod 5$, or if $n = 5t - 1$, then when $x, y \equiv -1 \mod 5$.
\end{itemize}
Regardless of whether $n = 5t + 1$ or $n = 5t - 1$, three out of the four possibilities for $(x,y) \in \{\pm 1\}^2$ (modulo 5) yield blue solutions. Because the possibilities are equally likely, $3/4 - o(1)$ of the viable blue pairs will yield blue solutions (the error term here comes from the fact that the number of blue pairs is not divisible by 4 in this case).
Therefore, since there are $(\lceil 2n/5 \rceil)^2 = 4n^2/25 + O(n)$ blue pairs, there will be 
\begin{equation}
\left(\frac{1}{2} - o(1)\right)\cdot\left(\frac{3}{4} - o(1)\right)\cdot\left(\frac{4n^2}{25} + O(n)\right) = \frac{3}{50}n^2 + O(n)
\end{equation}
blue solutions.
Counting green solutions here is similar, the difference being that now only $1/4$ of the
viable pairs $(x,y)$ will lead to a green solution: $x,y \equiv 2 \mod 5$ when $n = 5t + 1$ and $x,y \equiv -2 \mod 5$ when $n = 5t - 1$.
Finally adding the red, blue, and green solutions, we get
\begin{equation}
\left(\frac{1}{50} + \frac{3}{50} + \frac{1}{50}\right)n^2 + O(n) = \frac{1}{10}n^2 + O(n)
\end{equation}
monochromatic solutions.

The case where $n = 5t \pm 2$ is similar to the previous one but with the blue and green cases swapped, so we omit the computations.
Dividing by $n^2$, the total number of solutions, proves the theorem.
\end{proof}

\section{Future Work}
We believe that the lower bounds of (\ref{result-n}) and (\ref{result-Zn-not}) are the true maxima, but these are still open questions.
We will not speculate on whether the bounds in Theorem \ref{thm-alexis} are optimal, but it would interesting to investigate further and either find a better coloring or show that our coloring is the best possible.
The possibilities for future work are numerous.
Any of the following aspects of the problem can be altered, generating new questions:
the number of colors used, the equation/progression being studied,
flipping between ``maximizing rainbow'' and ``minimizing monochromatic'',
the set of inputs, and more.
Below are several open questions ready for exploration.
\begin{question}
Does $\displaystyle\lim_{n\to\infty}\rb_{[n]}(E)$ always exist? What about in the ``minimizing monochromatic'' setting?
\end{question}
\begin{conjecture}[\cite{TW2017}]
$\displaystyle\lim_{n\to\infty}\rb_{[n]}(x + y = z) = \frac{2}{5}$.
\end{conjecture}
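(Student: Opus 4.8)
The plan is to pass to the natural continuous (density) relaxation and attack the matching lower and upper bounds separately, with the upper bound being the crux. A $3$-coloring of $[n]$ corresponds, after rescaling $x \mapsto x/n$ and letting $n \to \infty$, to a measurable partition of $[0,1]$ into color classes $C_1, C_2, C_3$; write $c(s)$ for the color of $s$ and $u_i = \mathbf{1}_{C_i}$. Solutions to $x+y=z$ correspond to points $(s,t)$ of the triangle $T = \{(s,t) : s,t \ge 0,\ s+t \le 1\}$ via $(x,y,z)/n \to (s,t,s+t)$; the solution measure is uniform on $T$, and $|T| = 1/2$ agrees with $|T_{[n]}(x+y=z)| = n^2/2 + O(n)$. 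A solution is rainbow exactly when $c(s), c(t), c(s+t)$ are all distinct, so
\[
\rb_{[n]}(x+y=z) = 2 \max_{c} \int_T \mathrm{per}\,M(s,t)\, ds\, dt,
\]
where $M(s,t)$ is the $3\times 3$ matrix whose rows are the color-indicator vectors of $s$, $t$, and $s+t$ (its permanent is $1$ on rainbow points and $0$ elsewhere). Proving the conjecture amounts to showing this maximum equals $1/5$.

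For the lower bound I would exhibit an explicit coloring meeting the bound. Two families are natural. Periodic (mod-$m$) colorings are especially tractable: because residues are equidistributed on $T$, the rainbow proportion of a mod-$m$ coloring of $[n]$ equals the rainbow proportion of the induced $3$-coloring of $\Z_m$ for $x+y=z$, a finite computation over the $m^2$ pairs $(\overline{x},\overline{y})$. For instance, the $\Z_4$-coloring sending even residues to color $1$, $\overline{1}$ to color $2$, and $\overline{3}$ to color $3$ already yields proportion $6/16 = 3/8$. By contrast, purely interval colorings are weaker: a short computation shows the best monotone three-interval coloring gives proportion only $1/4$. I would therefore search over finite interval-and-residue patterns, compute the rainbow measure as a piecewise-polynomial function of the interval endpoints (a sum of areas of polygonal slabs cut out of $T$ by the conditions $c(s)=\alpha$, $c(t)=\beta$, $c(s+t)=\gamma$), optimize, and verify that the best pattern attains proportion $2/5$. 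Honestly, identifying the exact extremal coloring — which must improve on the $3/8$ achieved by the simplest periodic construction — is itself a nontrivial part of the work.

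The main obstacle is the upper bound $\int_T \mathrm{per}\,M \le 1/5$. The functional is multilinear (degree three) in the $u_i$, so fixing any two classes it is linear in the third; a bang–bang argument then shows the optimizer is a genuine $\{1,2,3\}$-valued coloring, but this alone does not bound the number of pieces. I would try to show that an optimizer can be taken to be a step function with boundedly many intervals, via an exchange/variational argument on $T$, thereby reducing the problem to a finite-dimensional optimization solvable by Lagrange/KKT analysis. The danger is that naive one-dimensional reductions are far too lossy: decomposing $T$ into fibers $\{s+t=z\}$ and bounding the rainbow measure on each fiber by $2\min$ of the two relevant cumulative color measures over-counts badly — for the equidistributed coloring this bound already evaluates to $1/3$, well above the target $1/5$ — precisely because the reflection $s \mapsto z-s$ on each fiber is a fixed involution that cannot be rearranged. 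A correct argument must retain the correlation between $c(s)$ and $c(z-s)$, so I expect to need either a genuinely two-dimensional exchange argument on $T$ or a Fourier/spectral treatment of the $u_i$ in the spirit of \cite{SW2017}, adapted to the ordered setting $[n]$ with its triangular constraint. Establishing step-function optimality, and pinning down the exact number of steps of the extremal coloring, is where I expect the real difficulty to lie.
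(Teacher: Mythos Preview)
The statement you are attempting to prove is listed in the paper as a \emph{conjecture} in the Future Work section; the paper gives no proof whatsoever, so there is nothing to compare your approach against. What you have written is not a proof either but an outline of a research program, and you say as much: for the lower bound you produce only a mod-$4$ coloring attaining $3/8 < 2/5$ and concede that finding a coloring reaching $2/5$ ``is itself a nontrivial part of the work''; for the upper bound you sketch a bang--bang / step-function reduction and a fiberwise estimate, observe that the latter is too lossy (giving $1/3$), and explicitly flag the step-function optimality and the two-dimensional exchange argument as the places ``where I expect the real difficulty to lie.''

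In short, neither half of the limit is established. The lower-bound gap is concrete: you have no coloring on record with rainbow proportion $2/5$, so even the easy direction of the conjecture is unproved in your write-up. The upper-bound gap is structural: passing to the density relaxation and invoking multilinearity to get $\{0,1\}$-valued optimizers is fine, but nothing in your argument bounds the number of intervals of an optimizer, and without that the ``finite-dimensional optimization solvable by Lagrange/KKT analysis'' step never begins. Since the paper itself treats this as open, what you have is a reasonable plan of attack, not a proof.
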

\begin{question}
What are some bounds on the maximum number of rainbow 3-APs when more than 3 colors are used?
\end{question}

\section*{Acknowledgments} 
 
We would like to thank Cal State San Bernardino's 
Undergraduate Summer Research Program for partially funding 
this research. In particular, during Summer 2023 the first author was
funded through Cal State San Bernardino's Office of Student Research
and the remaining authors received funding from the
Proactive Approaches for Training in Hispanics in STEM grant (PATHS).
Furthermore, during Fall 2023 all but the first author were funded through 
the Learning Aligned Employment Program (LAEP), 
and the third author was funded  through LAEP through Spring 2024.

\newpage 

{\footnotesize
\bibliographystyle{amsplain}
\bibliography{in_doc}}


 
{\footnotesize  
\medskip
\medskip
\vspace*{1mm} 
 
\noindent {\it Gabriel Elvin}\\  
California State University, San Bernardino\\
5500 University Pkwy\\
San Bernardino, CA 92407\\
E-mail: {\tt Gabriel.Elvin@csusb.edu}\\ \\ 

\noindent {\it Alexis Gonzales}\\  
California State University, San Bernardino\\
5500 University Pkwy\\
San Bernardino, CA 92407\\
E-mail: {\tt 006840256@coyote.csusb.edu}\\ \\  

\noindent {\it Alejandro Rodriguez}\\  
California State University, Fullerton \\
800 N State College Blvd\\
Fullerton, CA 92831 \\
E-mail: {\tt alejandro.rod98@csu.fullerton.edu}\\ \\

\noindent {\it Israel Wilbur}\\  
University of Georgia \\
University of Georgia Chapel, Herty Dr\\ 
Athens, GA 30602\\
E-mail: {\tt irw38277@uga.edu} \\ \\   

}

\end{document}